\documentclass{article}
\usepackage{amssymb,latexsym,amsmath,graphicx,amsthm,hyperref,tipa,comment,float}

\newtheorem{fivetotheonethird}{Proposition}
\newtheorem{phit}[fivetotheonethird]{Proposition}
\newtheorem{powersoftwo}[fivetotheonethird]{Proposition}
\newtheorem{largerthantwo}[fivetotheonethird]{Proposition}
\newtheorem{smallerthanphi}[fivetotheonethird]{Proposition}
\newtheorem{onepointfive}[fivetotheonethird]{Proposition}
\newtheorem{adhocone}[fivetotheonethird]{Proposition}
\newtheorem{adhoctwo}[fivetotheonethird]{Proposition}

\newtheorem{infareasquared}[fivetotheonethird]{Proposition}

\newtheorem{folk}{Lemma}
\newtheorem{alphalargerthangold}[folk]{Lemma}
\newtheorem{eventuallycomplete}[folk]{Lemma}
\newtheorem{boring}[folk]{Lemma}
\newtheorem{lessthantwice}[folk]{Lemma}
\newtheorem{ezupper}[folk]{Lemma}
\newtheorem{ezinclusion}[folk]{Lemma}

\newtheorem{corro}{Corollary}

\begin{document}

\vspace*{-2cm}

\Large
 \begin{center}
Completeness of exponentially increasing sequences  \\ 

\hspace{10pt}

\large
Wouter van Doorn \\

\hspace{10pt}

\end{center}

\hspace{10pt}

\normalsize

\vspace{-25pt}
\centerline{\bf Abstract}
For fixed positive reals $t$ and $\alpha$, consider the sequence $S_t(\alpha) = (s_1, s_2, \ldots, )$ with $s_n = \left \lfloor t\alpha^n \right \rfloor$. In 1964, Graham managed to characterize those pairs $(t, \alpha)$ with $0 < t < 1$ and $1 < \alpha < 2$ for which every large enough integer can be written as the sum of distinct elements of $S_t(\alpha)$. We show that his methods can be applied to deal with many other pairs of $(t, \alpha)$ as well.

\section{Introduction}
For a sequence or multiset $S$ of positive integers we define $P(S)$ as the set of all integers that can be written as the sum of distinct elements of $S$. We say that $S$ is complete if $\mathbb{N} \setminus P(S)$ is finite, and we say that $S$ is entirely complete if $P(S) = \mathbb{N}$. For positive real numbers $t$ and $\alpha$, in this paper we are interested in the completeness of the sequence $S_t(\alpha) = (s_1, s_2, \ldots, )$, where $s_n = \left \lfloor t\alpha^n \right \rfloor$. \\

Research into this question started with \cite{gra}, where Graham first showed that for $1 < \alpha \le 5^{1/3}$, the sequence $S_t(\alpha)$ is entirely complete if $t < 1$. He then more generally determined the full set of pairs $(t, \alpha)$ with $t < 1$ and $1 < \alpha < 2$ for which $S_t(\alpha)$ is (entirely) complete. Interestingly, for the $t$ and $\alpha$ in this square, $S_t(\alpha)$ is complete if, and only if, it is entirely complete. \\

With the aforementioned square having been dealt with, Graham \cite{gra1} asked in full generality for which values of $t > 0$ and $\alpha > 0$ the sequence $S_t(\alpha)$ is complete. This question was repeated by Erd\H{o}s and Graham in \cite{cnt}, and it is now listed as Problem \#349 at \cite{bloom349}. The goal of this paper is to revisit \cite{gra} and see how much further its ideas can be pushed. \\

We will generalize the result from Graham's paper that we just mentioned, and show that if $1 < \alpha \le 5^{1/3}$, then $S_t(\alpha)$ is entirely complete if, and only if, $$t < \min\left(\frac{2}{\alpha}, \frac{3}{\alpha^2}, \frac{5}{\alpha^{3}}\right).$$ On the other hand, if \begin{equation*}\alpha \ge \varphi := \frac{1 + \sqrt{5}}{2} \text{ and } t \ge \max\left(\min\Big(\frac{3}{\alpha^2}, \frac{5}{\alpha^{3}}\Big), 1\right), \end{equation*} then $S_t(\alpha)$ is not complete. Combined with Graham's results, this lets us finish off the case $\alpha \ge \varphi$. \\

As for $1 < \alpha < \varphi$, it is plausible that $S_t(\alpha)$ is complete for all $t > 0$, but this remains open in general. On the other hand, for any $T$ and any $\epsilon > 0$, in our final section we show how one can in principle prove this conjecture for all (uncountably many) pairs $(t, \alpha)$ with $t < T$ and $\alpha < \varphi - \epsilon$ at once, by a finite computation. We then give two examples of what such proofs might look like (one that is human-readable, and one with the help of a computer) and show the completeness of $S_t(\alpha)$ for various intervals of $t$ and $\alpha$. Finally, an idea used in these proofs is then applied to furthermore show that $S_t(\alpha)$ is complete for all $\alpha < 1 + \frac{1}{\lceil t \rceil + 2\lceil \sqrt{t} \rceil}$.

\section{Preliminary lemmas}
Before we fully dive in, in this section we mention a few small general lemmas that we will repeatedly make use of. Most of them are very elementary and well-known, but we will record them anyway. We start off with a lemma that Graham \cite{gra} attributes to Folkman, although we have not been able to find a reference. For completeness' sake (no pun intended) we also provide a proof.

\begin{folk} \label{folk}
Assume that a positive integer $m \notin P(S_t(\alpha))$ and a non-negative integer $r$ exist with $$s_1 + \cdots + s_r < m < s_{r+2},$$ while $s_n + s_{n+1} \le s_{n+2}$ holds for all $n > r$. Then $$m + s_{r+3} + s_{r+5} + \cdots + s_{r + 2k+1} \notin P(S_t(\alpha))$$ for all $k \ge 1$.
\end{folk}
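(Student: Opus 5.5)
Write $m_k = m + s_{r+3} + s_{r+5} + \cdots + s_{r+2k+1}$, with $m_0 = m$. I will prove $m_k \notin P(S_t(\alpha))$ by induction on $k$, strengthening the statement so it carries along the upper bound $m_k < s_{r+2k+2}$. For $k=0$ this is the hypothesis. For the inductive step I use $m_{k-1} < s_{r+2k}$ together with $s_{r+2k} + s_{r+2k+1} \le s_{r+2k+2}$, which holds since $r+2k > r$. Together these give
$$m_k = m_{k-1} + s_{r+2k+1} < s_{r+2k} + s_{r+2k+1} \le s_{r+2k+2},$$
so the strengthened bound propagates.

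A second tool is a lower bound on $m_k$. First I show that the inequality $s_n + s_{n+1} \le s_{n+2}$ for $n > r$ implies, by telescoping, that the partial sums stay below a later term: $s_1 + \cdots + s_{j} < s_{j+2}$, or at least that every representable number using only indices $\le j$ is less than $m_k$ in the relevant range. More precisely, I claim $s_1+\cdots+s_{r+2k} < m_k$ for all $k \ge 0$, by induction. The base case $k=0$ is the hypothesis. For the step,
$$s_1+\cdots+s_{r+2k} = \left(s_1+\cdots+s_{r+2k-2}\right) + s_{r+2k-1} + s_{r+2k} < m_{k-1} + s_{r+2k+1} = m_k,$$
using the induction hypothesis and $s_{r+2k-1}+s_{r+2k} \le s_{r+2k+1}$, which needs $r+2k-1 > r$, i.e. $k\ge1$. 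So at each stage $m_k$ lies strictly between the sum of all terms up to index $r+2k$ and $s_{r+2k+2}$, mirroring the hypothesis with $r$ replaced by $r+2k$.

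Now suppose, for $k \ge 1$, that $m_k = \sum_{i\in I} s_i$ for a finite set $I$ of distinct indices. Since $m_k < s_{r+2k+2}$, and the sequence is nondecreasing (floor of an increasing function), no index in $I$ is at least $r+2k+2$. Since the sum of all $s_i$ with $i \le r+2k$ is less than $m_k$, the index $r+2k+1$ must lie in $I$. Removing it, $m_{k-1} = \sum_{i \in I\setminus\{r+2k+1\}} s_i$ with all remaining indices $\le r+2k$. This is a representation of $m_{k-1}$, which contradicts the induction hypothesis $m_{k-1}\notin P(S_t(\alpha))$. This completes the induction.

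The main subtlety I expect is in the monotonicity step. Excluding indices $\ge r+2k+2$ needs $s_i \ge s_{r+2k+2}$ for those $i$. This holds because $t\alpha^n$ is increasing when $\alpha>1$, which is the setting of the paper; for $\alpha\le1$ the hypothesis $s_n+s_{n+1}\le s_{n+2}$ already forces growth beyond $r$. The other point to check is the bookkeeping of the range where the inequality $s_n+s_{n+1}\le s_{n+2}$ is invoked: we only use it with $n = r+2k-1$ and $n = r+2k$, both $> r$ when $k \ge 1$.
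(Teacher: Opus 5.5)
Your proof is correct and follows essentially the same approach as the paper: an induction carrying $s_1+\cdots+s_{r+2k} < m_k < s_{r+2k+2}$, propagated by adding $s_{r+2k+1}$ and applying $s_n+s_{n+1}\le s_{n+2}$ on both sides, then forcing $s_{r+2k+1}$ into any representation and deleting it. The differences are minor. You index from $m_0=m$ where the paper starts at $m_1=m$. You also spell out the monotonicity needed to rule out indices $\ge r+2k+2$, which the paper uses tacitly.
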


\begin{proof}
For an integer $k \ge 1$ define 

\begin{equation*}
m_k = m + \sum_{i=1}^{k-1} s_{r + 2i + 1},
\end{equation*}

and assume by induction $m_k \notin P(A)$ while

\begin{equation*}
\sum_{i=1}^{r+2(k-1)} s_i < m_k < s_{r+2k}.
\end{equation*}

This is certainly true for $k = 1$ by assumption. Adding $s_{r+2k+1}$ to these inequalities and using $s_{n} + s_{n+1} \le s_{n+2}$ on both the left- and the right-hand sides, we deduce

\begin{equation*}
\sum_{i=1}^{r+2k} s_i < m_{k+1} < s_{r+2k+2}.
\end{equation*}

We therefore see that, if $m_{k+1} \in P(S_t(\alpha))$, then you would need to use $s_{r+2k+1}$ in any representation of $m_{k+1}$. But this would imply that $m_k = m_{k+1} - s_{r+2k+1}$ has a representation as well, which contradicts the induction hypothesis.
\end{proof}

Lemma \ref{folk} comes in handy whenever $\alpha \ge \varphi$, as we then have the following inequalities, the first part of which is Lemma $3$ in \cite{gra}.

\begin{alphalargerthangold} \label{alphalarge}
If $\alpha \ge \varphi$, then $s_{n} + s_{n+1} \le s_{n+2}$ for all $n \in \mathbb{N}$. And if $\alpha > 2$, then $1 + s_1 + \cdots + s_n < s_{n+1}$ if $n$ is large enough.
\end{alphalargerthangold}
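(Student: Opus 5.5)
For the first part, I would write $x_n = t\alpha^n$, so that $s_n = \lfloor x_n \rfloor$, and split each term as $x_n = s_n + f_n$ with $f_n \in [0,1)$. Since $\alpha \ge \varphi$ gives $\alpha^2 \ge \alpha + 1$, we have
\begin{equation*}
x_{n+2} = t\alpha^n \alpha^2 \ge t\alpha^n(1 + \alpha) = x_n + x_{n+1}.
\end{equation*}
Hence
\begin{equation*}
s_{n+2} = \lfloor x_{n+2} \rfloor \ge \lfloor x_n + x_{n+1} \rfloor = \lfloor s_n + s_{n+1} + f_n + f_{n+1} \rfloor \ge s_n + s_{n+1},
\end{equation*}
because $s_n + s_{n+1}$ is an integer and $f_n + f_{n+1} \ge 0$. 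This argument needs no restriction on $n$ or $t$, so the first claim holds for all $n \in \mathbb{N}$.

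For the second part, I would compare $s_{n+1}$ with a geometric sum. We have
\begin{equation*}
s_1 + \cdots + s_n \le t(\alpha + \cdots + \alpha^n) = t\alpha\,\frac{\alpha^n - 1}{\alpha - 1} < \frac{t\alpha^{n+1}}{\alpha - 1},
\end{equation*}
and on the other side $s_{n+1} > t\alpha^{n+1} - 1$. It therefore suffices to have
\begin{equation*}
1 + \frac{t\alpha^{n+1}}{\alpha - 1} \le t\alpha^{n+1} - 1,
\end{equation*}
which is equivalent to
\begin{equation*}
t\alpha^{n+1}\Big(1 - \frac{1}{\alpha - 1}\Big) \ge 2, \qquad\text{that is,}\qquad t\alpha^{n+1}\,\frac{\alpha - 2}{\alpha - 1} \ge 2.
\end{equation*}
When $\alpha > 2$ the coefficient $(\alpha - 2)/(\alpha - 1)$ is strictly positive, and $t\alpha^{n+1} \to \infty$. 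So this inequality holds for all sufficiently large $n$, and that gives the second claim.

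There is no real obstacle here. The only points needing care are keeping the inequalities strict when passing through the floors, and noticing that the hypothesis $\alpha > 2$ (not merely $\alpha \ge 2$) is exactly what makes the factor $\alpha - 2$ positive. Indeed, for $\alpha = 2$ the inequality can fail, for instance when $t = 1$, where $1 + 2 + \cdots + 2^n = 2^{n+1} - 1$ so $1 + s_1 + \cdots + s_n = 2^{n+1} - 1 = s_{n+1}$, which is not strictly smaller.
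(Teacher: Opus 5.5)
Your proof is correct and follows essentially the same route as the paper: $\alpha^2 \ge \alpha + 1$ combined with $\lfloor x \rfloor + \lfloor y \rfloor \le \lfloor x + y \rfloor$ for the first part, and for the second part a comparison of the geometric sum with $t\alpha^{n+1} - 1$, where your condition $t\alpha^{n+1}(\alpha-2)/(\alpha-1) \ge 2$ is the paper's $t\alpha^{n+1}(\alpha-2) > 2\alpha-2$. One slip in your closing remark: for $t = 1$, $\alpha = 2$ you have $s_{n+1} = 2^{n+1}$, so $1 + s_1 + \cdots + s_n = 2^{n+1} - 1 < s_{n+1}$ and the inequality actually holds. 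A genuine failure at $\alpha = 2$ is $t = 1/2$, where $s_n = 2^{n-1}$ and $1 + s_1 + \cdots + s_n = 2^n = s_{n+1}$.
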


\begin{proof}
If $\alpha \ge \varphi$, we note that 
\begin{align*}
t \alpha^n + t \alpha^{n+1} &= t\left(\frac{1}{\alpha^2} + \frac{1}{\alpha}\right)\alpha^{n+2} \\
&\le t\left(\frac{1}{\varphi^2} + \frac{1}{\varphi}\right)\alpha^{n+2} \\
&= t\alpha^{n+2}.
\end{align*}

Taking the floor on both sides and realizing that the inequality $$\lfloor t \alpha^n \rfloor + \lfloor t \alpha^{n+1}\rfloor  \le \lfloor t \alpha^n + t \alpha^{n+1} \rfloor$$ holds in full generality, finishes the proof for the first part of the lemma. \\

If $\alpha > 2$, let $n$ be large enough so that $$t\alpha^{n+1}(\alpha - 2) > 2\alpha - 2.$$ Then $$\frac{t\alpha^{n+1}}{\alpha - 1} < t\alpha^{n+1} - 2,$$ as $$(t\alpha^{n+1} - 2)(\alpha - 1) - t\alpha^{n+1} = t\alpha^{n+1}(\alpha - 2) - 2\alpha + 2 > 0.$$ We therefore get
\begin{align*}
1 + \sum_{i=1}^{n} s_i&\le 1 + \sum_{i=1}^{n} t\alpha^i \\
&= 1 + \frac{t(\alpha^{n+1} - \alpha)}{\alpha - 1} \\
&< 1 + \frac{t\alpha^{n+1}}{\alpha - 1} \\
&< t\alpha^{n+1} - 1 \\
&< s_{n+1}. \qedhere
\end{align*}
\end{proof}

Combining Lemma \ref{folk} and the first part of Lemma \ref{alphalarge} provides the following corollary.

\begin{corro} \label{corro}
If $\alpha \ge \varphi$ and $s_1 + \cdots + s_r < m < s_{r+1}$ for some $m \ge 1$, $r \ge 0$, then $S_t(\alpha)$ is not complete.
\end{corro}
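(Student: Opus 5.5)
The plan is to reduce Corollary~\ref{corro} directly to Lemma~\ref{folk}, using the first part of Lemma~\ref{alphalarge} for the growth hypothesis. Since $\alpha \ge \varphi$, Lemma~\ref{alphalarge} gives $s_n + s_{n+1} \le s_{n+2}$ for every $n \in \mathbb{N}$, and in particular for all $n > r$. What remains is to check the two numerical hypotheses of Lemma~\ref{folk}: that $m \notin P(S_t(\alpha))$, and that $s_1 + \cdots + s_r < m < s_{r+2}$.

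The second hypothesis is immediate. The sequence is non-decreasing because $\alpha > 1$, so $m < s_{r+1} \le s_{r+2}$.

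For $m \notin P(S_t(\alpha))$, suppose $m$ were a sum of distinct elements of $S_t(\alpha)$. Every term $s_j$ with $j \ge r+1$ satisfies $s_j \ge s_{r+1} > m$, and all terms are non-negative. So such a representation can only use $s_1, \ldots, s_r$, which forces $m \le s_1 + \cdots + s_r$, contradicting the assumption. If some $s_i$ equal $0$ this changes nothing. Note also that the case $r = 0$ is covered, since then the empty sum is $0 < m$.

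Lemma~\ref{folk} then shows that the integers
$$m + s_{r+3} + s_{r+5} + \cdots + s_{r+2k+1}, \qquad k \ge 1,$$
all lie outside $P(S_t(\alpha))$. To conclude that $S_t(\alpha)$ is not complete, I need infinitely many distinct such integers. This holds because these integers are strictly increasing in $k$ as soon as the added terms $s_{r+2k+1}$ are positive, which happens for all large $k$ since $t\alpha^n \to \infty$. Hence $\mathbb{N} \setminus P(S_t(\alpha))$ is infinite.

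The only point needing any care is this last one: the integers produced by Lemma~\ref{folk} must actually be infinitely many, not repetitions caused by zero terms. As shown, that follows at once from $s_n \to \infty$, so I expect no real obstacle.
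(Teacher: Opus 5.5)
Your proposal is correct and matches the paper's argument: the paper obtains the corollary by combining Lemma~\ref{folk} with the first part of Lemma~\ref{alphalarge}, and you supply the routine checks it leaves implicit ($m \notin P(S_t(\alpha))$, $m < s_{r+1} \le s_{r+2}$, and that infinitely many distinct integers are excluded). One small simplification is available: $s_{r+2k+1} \ge s_{r+1} > m \ge 1$ holds for every $k$, so the excluded integers are strictly increasing from the start, and you do not need to appeal to $s_n \to \infty$.
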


In the next section we will apply Corollary \ref{corro} to show that certain pairs of $(\alpha, t)$ lead to sequences which are not complete. But first we will quickly mention one more general lemma and two lemmas specific to our sequence, all geared towards showing that some sequences are complete.

\begin{lessthantwice}[\cite{gra}, Lemma $2$] \label{lessthantwice}
Assume a non-negative integer $r$ exists such that the inequality $s_{n+1} \le 1 + s_1 + \cdots + s_n$ holds for all non-negative $n \le r$, while $s_{n+1} \le 2s_n$ holds for all $n > r$. Then $S_t(\alpha)$ is entirely complete.
\end{lessthantwice}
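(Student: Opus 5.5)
The plan is to prove by induction on $n$ that $P(\{s_1,\dots,s_n\}) \supseteq \{1,2,\dots,s_1+\cdots+s_n\}$. Since the partial sums are unbounded (the $s_n$ are positive for large $n$; any zero terms contribute nothing and can be ignored), this gives $P(S_t(\alpha)) = \mathbb{N}$. Write $\Sigma_n = s_1 + \cdots + s_n$ with $\Sigma_0 = 0$.

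The key step is the standard extension argument. Suppose every integer in $[0,\Sigma_n]$ is a sum of distinct elements among $s_1,\dots,s_n$, and suppose $s_{n+1} \le 1 + \Sigma_n$. Then every $m \in [0, \Sigma_{n+1}]$ is representable. If $m \le \Sigma_n$, use the induction hypothesis. Otherwise $m > \Sigma_n \ge s_{n+1}-1$, so $0 \le m - s_{n+1} \le \Sigma_n$; represent $m - s_{n+1}$ by the hypothesis and add $s_{n+1}$. The base case $n=0$ is trivial, and the hypothesis $s_{n+1}\le 1+\Sigma_n$ for $n \le r$ carries the induction through $n = r+1$.

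For $n > r$ it therefore suffices to show $s_{n+1} \le 1 + \Sigma_n$. I argue this by a secondary induction. We have $s_{n+1} \le 2 s_n = s_n + s_n$. Applying the already established inequality at index $n-1$ (valid for $n-1 \le r$ by assumption, or by the inductive step if $n-1 > r$) gives $s_n \le 1 + \Sigma_{n-1}$. Hence
\begin{equation*}
s_{n+1} \le s_n + 1 + \Sigma_{n-1} = 1 + \Sigma_n.
\end{equation*}
This requires $n \ge 1$, which holds since $n > r \ge 0$.

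The only delicate point is bookkeeping at the boundary $n = r+1$: there we use $s_{r+1} \le 1 + \Sigma_r$, which is exactly the hypothesis at index $r$. No other obstacle is expected.
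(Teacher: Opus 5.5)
Your proof is correct. The paper gives no argument of its own here and simply cites Graham's Lemma 2. Your route is the standard one: you establish the Brown-type condition $s_{n+1}\le 1+\Sigma_n$ for every $n\ge 0$, carrying it past $r$ via $s_{n+1}\le 2s_n\le s_n+1+\Sigma_{n-1}$. You then grow the interval $[0,\Sigma_n]$ of representable integers one term at a time, using distinct indices, which matches the paper's multiset convention.

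One point deserves to be explicit. Your parenthetical claim that the $s_n$ are eventually positive is where an unstated hypothesis enters: infinitely many $s_n$ must be nonzero, which is automatic for $\alpha>1$. That hypothesis holds in every application in the paper, but it cannot be dropped. For $t=2$ and $\alpha=1/2$ the lemma's hypotheses hold with $r=0$, yet $2\notin P(S_t(\alpha))$.
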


\begin{boring} \label{bore}
Assume $t \ge 1$. If $1 < \alpha < \frac{3}{2}$, then the inequality $s_{n+1} \le 2s_n$ holds for all $n \ge 1$. If $\frac{3}{2} \le \alpha < \varphi$, then $s_{n+1} \le 2s_n$ holds for all $n \ge 2$. And if $\varphi \le \alpha < 5^{1/3}$, then $s_{n+1} \le 2s_n$ holds for all $n \ge 3$.
\end{boring}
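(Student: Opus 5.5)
The plan is to compare $s_{n+1}$ with $\alpha s_n$ directly through the fractional part, which reduces all three cases to a single lower bound on $s_n$. Write $t\alpha^n = s_n + f$ with $0 \le f < 1$. Then
\begin{equation*}
t\alpha^{n+1} = \alpha s_n + \alpha f < \alpha s_n + \alpha .
\end{equation*}
Since $s_{n+1} = \lfloor t\alpha^{n+1}\rfloor$ and $2s_n$ is an integer, it suffices to show $t\alpha^{n+1} < 2s_n + 1$. For this it is enough that $\alpha s_n + \alpha \le 2s_n + 1$, which rearranges to
\begin{equation*}
(2-\alpha)\, s_n \;\ge\; \alpha - 1, \qquad\text{i.e.}\qquad s_n \;\ge\; \frac{\alpha-1}{2-\alpha}.
\end{equation*}
The strict inequality we need comes from $\alpha f < \alpha$. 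All three cases have $\alpha < 2$, so the division by $2-\alpha$ is legitimate.

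The function $g(\alpha) = \frac{\alpha-1}{2-\alpha}$ is increasing on $(1,2)$. Also, because $t \ge 1$ and $\alpha > 1$, the sequence $s_n$ is non-decreasing and satisfies $s_n \ge \lfloor \alpha^n \rfloor$. So in each case I only need to bound $g$ at the right endpoint and bound $s_n$ below at the first index claimed.
\begin{itemize}
\item For $1<\alpha<\frac32$: here $g(\alpha) < g(3/2) = 1$, and $s_n \ge s_1 \ge \lfloor \alpha \rfloor = 1$.
\item For $\frac32 \le \alpha < \varphi$: here $g(\alpha) < g(\varphi) = \frac{\varphi-1}{2-\varphi} = \varphi < 2$, using $\varphi - 1 = 1/\varphi$ and $2-\varphi = 1/\varphi^2$. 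For $n \ge 2$ we have $s_n \ge \lfloor \alpha^2 \rfloor \ge \lfloor 9/4 \rfloor = 2$.
\item For $\varphi \le \alpha < 5^{1/3}$: here $g(\alpha) < g(5^{1/3})$. Since $5^{1/3} < 1.71$, this gives $g(\alpha) < 0.71/0.29 < 3$. For $n \ge 3$ we have $s_n \ge \lfloor \varphi^3 \rfloor = \lfloor 2\varphi+1 \rfloor = 4 \ge 3$.
\end{itemize}

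The only step needing care is the numerical bound in the third case, namely checking $g(5^{1/3}) < 3$. This is equivalent to $4\cdot 5^{1/3} < 7$, that is, $5^{1/3} < 7/4$. Cubing gives $5 < 343/64 \approx 5.36$, which holds. Everything else is routine.
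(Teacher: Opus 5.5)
Your proof is correct: the fractional-part criterion $s_n \ge \frac{\alpha-1}{2-\alpha}$, together with the endpoint checks $g(3/2)=1$, $g(\varphi)=\varphi<2$ and $g(5^{1/3})<3$ against $s_1\ge 1$, $s_2\ge 2$ and $s_3\ge 4$, establishes all three cases. The paper itself gives no argument beyond saying the claims follow from the first part of Graham's Lemma 4. Your derivation is a self-contained version of that cited step and can stand in for the citation.
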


\begin{proof}
One can check that these claims follow from the first part of \cite[Lemma $4$]{gra}.
\end{proof}

\begin{eventuallycomplete} \label{eventually}
Let $t \ge 1$ and $1 < \alpha < \varphi$, and assume that positive integers $r$ and $X$ exist such that $m \in P(\{s_1, \ldots, s_r \})$ for all $m$ with $X \le m < X + s_{r+1}$. Then $S_t(\alpha)$ is complete.
\end{eventuallycomplete}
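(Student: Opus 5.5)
The plan is the standard ``interval-growing'' induction. Define $I_n = [X, X + s_1 + \cdots + s_n - (s_1+\cdots+s_r)]$, or more simply show by induction on $n \ge r$ that every integer $m$ with
\[
X \le m < X + s_{r+1} + s_{r+2} + \cdots + s_{n+1} - (s_{r+2} + \cdots + s_{n+1}) + \ldots
\]
is representable. Concretely, I will prove that for every $n \ge r$, every $m$ with $X \le m < X + s_{r+1} + \cdots + s_{n+1}$ lies in $P(\{s_1,\ldots,s_{n+1}\})$. Write $L_n = s_{r+1} + \cdots + s_{n+1}$, with $L_{r} = s_{r+1}$. The base case $n=r$ is essentially the hypothesis. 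It gives all $m \in [X, X+s_{r+1})$ using only $s_1,\dots,s_r$, and we may even regard this as a statement about $P(\{s_1,\ldots,s_r\})$.

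\textbf{Induction step.} Suppose every $m \in [X, X + L_{n-1})$ lies in $P(\{s_1,\ldots,s_n\})$. Adding $s_{n+1}$ (which is not used in these representations), every $m \in [X + s_{n+1}, X + L_{n-1} + s_{n+1}) = [X+s_{n+1}, X+L_n)$ lies in $P(\{s_1,\ldots,s_{n+1}\})$. The union with the old interval is the full interval $[X, X+L_n)$ provided $s_{n+1} \le L_{n-1} = s_{r+1}+\cdots+s_n$. Since $n \ge r+1$, this sum contains at least $s_n$, so it suffices to have $s_{n+1} \le s_n + s_{n-1}$ (when $n \ge r+2$) or $s_{r+2} \le s_{r+1}$ in the edge case.

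\textbf{Main obstacle.} The edge case $n = r+1$ is the problem, since $s_{r+2} \le s_{r+1}$ fails. I would fix this by first enlarging $r$: replace $r$ by $r' = r+1$ and $X$ by the same $X$. From the base case and the argument above, all $m \in [X, X+s_{r+1})$ and all $m \in [X+s_{r+1}, X+2s_{r+1})$ lie in $P(\{s_1,\ldots,s_{r+1}\})$. So we get an interval of length $2s_{r+1} \ge s_{r+2}$, because $s_{r+2} \le 2s_{r+1}$ by Lemma \ref{bore} if $r \ge 2$. If $r$ is small, I first enlarge $r$ repeatedly in the same way; each step doubles the window so that only $\alpha<\varphi$ asymptotics matter. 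This gives the hypothesis for $r+1$ with window at least $s_{r+2}$.

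With that base, I would run the induction with the invariant ``window length $\ge s_{n} + s_{n-1}$'' rather than $L_n$. The needed inequality becomes $s_{n+1} \le s_n + s_{n-1}$ for all large $n$. This follows from $\alpha < \varphi$: we have $t\alpha^{n+1} \le t\alpha^n + t\alpha^{n-1} - c\,\alpha^{n-1}$ with $c = t(1+\alpha-\alpha^2) > 0$, which beats the floor losses of at most $2$ once $n$ is large. So I take $r$ large enough at the start, using the doubling step above, which only needs $s_{k+1}\le 2s_k$, valid for $k\ge 3$ by Lemma \ref{bore}.

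\textbf{Conclusion.} The window $[X, X+L_n)$ then grows without bound, since $L_n \to \infty$. Every integer $m \ge X$ is therefore in $P(S_t(\alpha))$, so $S_t(\alpha)$ is complete.
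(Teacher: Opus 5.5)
Your argument is correct in substance, and its decisive step is exactly the paper's proof. The hypothesis at $r$ gives $[X, X+2s_{r+1}) \subseteq P(\{s_1,\ldots,s_{r+1}\})$ by shifting the window by the unused $s_{r+1}$. Then $2s_{r+1} \ge s_{r+2}$ turns this into the same hypothesis at $r+1$, with the same $X$.

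What you missed is that this step alone finishes the proof. Iterating it gives $[X, X+s_{n+1}) \subseteq P(\{s_1,\ldots,s_n\})$ for every $n \ge r$, and $s_{n+1}\to\infty$. So your second phase, with the invariant ``window $\ge s_n+s_{n-1}$'' and the asymptotic inequality $s_{n+1}\le s_n+s_{n-1}$, is valid but superfluous. The ``edge case'' you ran into came only from a mis-indexing in your first invariant. Your base case weakened $P(\{s_1,\ldots,s_r\})$ to $P(\{s_1,\ldots,s_{r+1}\})$, and your first shift was $s_{r+2}$, so $s_{r+1}$ was never used as a shift.

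One point needs correcting: you misquote Lemma \ref{bore}. For $t\ge 1$ and $\frac{3}{2}\le\alpha<\varphi$ it gives $s_{k+1}\le 2s_k$ for all $k\ge 2$, and for $1<\alpha<\frac{3}{2}$ for all $k\ge 1$. The range $k\ge 3$ belongs to $\varphi\le\alpha<5^{1/3}$. The step $r\to r+1$ needs $s_{r+2}\le 2s_{r+1}$, i.e.\ $k=r+1\ge 2$, so it is available for every positive integer $r$, including $r=1$.

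Your fallback for small $r$ (``enlarge $r$ repeatedly; each step doubles the window, so only asymptotics matter'') is not an argument on its own. If $s_{r+2}>2s_{r+1}$, the doubled window would not reach $s_{r+2}$, and adding $s_{r+2}$ would leave a gap. Repeating the step would gain nothing. What covers small $r$ is the correct range in Lemma \ref{bore}, not asymptotics.
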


\begin{proof}
The hypothesis implies in particular that $m \in P(\{s_1, \ldots, s_r, s_{r+1} \})$ for all $m$ with $X + s_{r+1} \le m < X + 2s_{r+1}$. Since $s_{r+2} \le 2s_{r+1}$ by Lemma \ref{bore}, we deduce $m \in P(\{s_1, \ldots, s_r, s_{r+1} \})$ for all $m$ with $X \le m < X + s_{r+2}$, so we are done by induction, and see that all $m \ge X$ are representable.
\end{proof}

\section{Main results}
We are now ready to deal with the completeness of $S_t(\alpha)$. And to provide the full picture, we swiftly deal with the near-trivial case where $\alpha$ does not belong to the interval $(1, 2)$ first, before moving on to the more interesting cases.

\begin{largerthantwo}
If $\alpha \notin [1, 2]$, then $S_t(\alpha)$ is not complete for any $t > 0$. And if $\alpha = 1$, then $S_t(\alpha)$ is (entirely) complete if, and only if, $t \in [1, 2)$.
\end{largerthantwo}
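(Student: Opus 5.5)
We split into the three ranges $\alpha < 1$, $\alpha > 2$ and $\alpha = 1$.

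\emph{Case $\alpha < 1$.} Here $t\alpha^n \to 0$, so $s_n = 0$ for all sufficiently large $n$. Only finitely many elements of $S_t(\alpha)$ are positive, hence $P(S_t(\alpha))$ is finite and $S_t(\alpha)$ is not complete. (We treat zero entries as contributing nothing.)

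\emph{Case $\alpha > 2$.} By the second part of Lemma \ref{alphalarge} there is an $n_0$ with
\[
1 + s_1 + \cdots + s_n < s_{n+1} \quad \text{for all } n \ge n_0 .
\]
Fix $n \ge n_0$ and put $m = s_1 + \cdots + s_n + 1$, so that $s_1 + \cdots + s_n < m < s_{n+1}$. Any representation of $m$ could use only $s_1, \ldots, s_n$, whose total is smaller than $m$; hence $m \notin P(S_t(\alpha))$. There are two ways to finish.
\begin{itemize}
\item Directly: since $n \ge n_0$ was arbitrary and these $m$ are unbounded, infinitely many integers are not representable.
\item Via the paper's tools: $\alpha > 2 > \varphi$, so Corollary \ref{corro} applies with $r = n$ and gives the same conclusion.
\end{itemize}
We need $m \ge 1$, which is automatic.

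\emph{Case $\alpha = 1$.} Every term equals $s_n = \lfloor t \rfloor =: c$, so $S_t(1)$ is the constant sequence $(c, c, \ldots)$.
\begin{itemize}
\item If $c = 0$, i.e.\ $t < 1$, nothing positive is representable and the sequence is not complete.
\item If $c = 1$, i.e.\ $t \in [1,2)$, the sum of $k$ distinct terms is $k$. Every $k \in \mathbb{N}$ is therefore representable, so the sequence is entirely complete.
\item If $c \ge 2$, i.e.\ $t \ge 2$, then $P(S_t(1))$ consists of multiples of $c$. It misses every integer $\equiv 1 \pmod c$, so the sequence is not complete.
\end{itemize}
This gives both directions of the ``if and only if''.

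The only step needing a little care is $\alpha > 2$. There we need unboundedly many gaps, not just one, and the second part of Lemma \ref{alphalarge} supplies them for every large $n$. A minor subtlety is how zero terms are treated when $\alpha < 1$ (or $t$ is small), but they never affect $P(S)$.
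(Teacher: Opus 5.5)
Your proof is correct and follows the paper's argument: $P(S_t(\alpha))$ is finite for $\alpha<1$, and $P(S_t(1))=\lfloor t\rfloor\mathbb{N}$, which you split into the cases $c=0,1,\ge 2$. For $\alpha>2$, the second part of Lemma \ref{alphalarge} produces a non-representable integer strictly between $s_1+\cdots+s_n$ and $s_{n+1}$ for every large $n$; the paper takes $s_{n+1}-1$ where you take $1+s_1+\cdots+s_n$, which makes no difference.
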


\begin{proof}
If $\alpha < 1$, then $s_n = 0$ for all large $n$, so that $P(S_t(\alpha))$ is actually finite. If $\alpha = 1$, then $s_n = \left \lfloor t \right \rfloor$ for all $n$, so that $P(S_t(\alpha)) = \left \lfloor t \right \rfloor \mathbb{N}$. Finally, if $\alpha > 2$, then $s_n - 1 \notin P(S_t(\alpha))$ for all large enough $n$, by the second part of Lemma \ref{alphalarge}. 
\end{proof}

\begin{powersoftwo}
If $\alpha = 2$, then $S_t(\alpha)$ is (entirely) complete if, and only if, $t = \frac{1}{2^k}$ for some $k \ge 1$.
\end{powersoftwo}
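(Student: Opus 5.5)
The plan is to read everything off the binary expansion of $t$. With $\alpha = 2$ we have $s_n = \lfloor t2^n\rfloor$, so $s_n$ is exactly the integer part of $t$ shifted $n$ binary places to the left.

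\emph{Sufficiency.} If $t = 2^{-k}$ with $k \ge 1$, then $s_n = 0$ for $n < k$ and $s_n = 2^{n-k}$ for $n \ge k$. The nonzero elements of $S_t(2)$ are therefore $1, 2, 4, \ldots$, and binary representation shows that $S_t(2)$ is entirely complete. (The case $k = 0$, i.e.\ $t=1$, must be excluded: there $s_n = 2^n$, so every element is even.)

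\emph{Necessity.} Suppose $t$ is not of the form $2^{-k}$ with $k \ge 1$. I will find infinitely many $n$ with
\[ s_1 + \cdots + s_n \le s_{n+1} - 2. \]
For such $n$ the integer $m = s_{n+1} - 1$ satisfies $s_1 + \cdots + s_n < m < s_{n+1}$. Since $2 \ge \varphi$, Corollary \ref{corro} then shows that $S_t(2)$ is not complete. Because $s_{n+1} \to \infty$, we may take $n$ large enough that $m \ge 1$.

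To get the inequality, write $f_i = \{t2^i\}$ for the fractional part. Summing the geometric series $t(2 + 4 + \cdots + 2^n) = t2^{n+1} - 2t$ gives the exact identity
\[ G_n := s_{n+1} - \sum_{i=1}^n s_i = 2t + \sum_{i=1}^n f_i - f_{n+1}. \]
Now expand $t = \sum_p b_p 2^{-p}$ in binary, using the terminating expansion if $t$ is dyadic, and compute $G_n$ bit by bit.
\begin{itemize}
\item A $1$-bit at position $p \ge 1$ contributes $2\cdot 2^{-p}$ to $2t$. It contributes $2^{i-p}$ to $f_i$ for each $1 \le i < p$, provided $n \ge p$. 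Its total contribution is therefore
\[ 2^{1-p} + 2^{-p}\left(2^{p} - 2\right) = 1. \]
\item A bit at position $p \le 0$, that is, in the integer part of $t$, contributes only to $2t$, namely $2^{1-p} \ge 2$.
\item The subtracted term $f_{n+1}$ removes only the tail bits at positions beyond $n+1$.
\end{itemize}

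Consequently, if $t$ is dyadic, then for all large $n$ we have $f_{n+1} = 0$ and
\[ G_n = (\text{number of fractional }1\text{-bits of }t) + 2\lfloor t\rfloor. \]
This is at least $2$ unless $\lfloor t\rfloor = 0$ and $t$ has exactly one fractional $1$-bit, i.e.\ unless $t = 2^{-k}$ with $k \ge 1$. If $t$ is not dyadic, the number of fractional $1$-bits among positions $\le n$ tends to infinity, while the corrections (the tail and $f_{n+1}$) are bounded by $1$. Hence $G_n \to \infty$, and in particular $G_n \ge 2$ for all large $n$.

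The main obstacle is carrying out this bit-counting cleanly. In particular, the tail bits at positions $> n$ must be handled honestly in the non-dyadic case: they add a nonnegative but sub-unit amount to the $f_i$ and are partly cancelled by $f_{n+1}$. I would bound their net effect crudely by $-1$, which the unbounded growth of $G_n$ easily absorbs.
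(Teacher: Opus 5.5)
Your proof is correct, and it reaches the conclusion by a somewhat different route from the paper.

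The paper splits into two cases. For $t \ge 1$ it notes $s_1 \ge 2$ and applies Corollary~\ref{corro} with $r=0$, $m=1$. For $t<1$ it takes the first index $n$ with $s_n = 1$ and writes $t2^n = 1+\epsilon$. It then observes that $s_{n+i} = 2^i$ for $i<j$ and $s_{n+j} = 2^j+1$, where $j = \lceil -\log_2 \epsilon \rceil$, and uses $r = n+j-1$, $m = 2^j$.

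You instead compute the gap $G_n = s_{n+1} - \sum_{i\le n} s_i$ for every $n$ at once. Your bit-counting is actually exact: the tail bits cancel completely rather than merely costing at most $1$. The cleanest way to see this is the identity $\lfloor 2x\rfloor = 2\lfloor x\rfloor + \lfloor 2\{x\}\rfloor$. It gives the recurrence $s_{n+1} = 2s_n + b_{n+1}$ with $s_0 = \lfloor t\rfloor$, hence $G_n - G_{n-1} = b_{n+1}$ and $G_n = 2\lfloor t\rfloor + \sum_{p=1}^{n+1} b_p$. This needs no dyadic/non-dyadic split and no crude $-1$ bound, so the step you flagged as the main obstacle disappears.

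Seen this way, the two proofs use the same witness for $t<1$. The paper's $r = n+j-1$ is exactly the first index at which your count reaches $2$, namely the position of the second $1$-bit of $t$ minus one. Its $m = 2^j$ equals your $s_{r+1}-1$.

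What your approach buys is a uniform description of all gaps. It shows that $G_n \le 1$ for every $n$ precisely when $t = 2^{-k}$ with $k \ge 1$. The paper's argument is shorter and avoids binary expansions and fractional parts altogether.
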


\begin{proof}
For $t = \frac{1}{2^k}$ the completeness of $S_t(\alpha)$ follows from the completeness of the sequence of powers of two, while for $t \ge 1$ we can apply Corollary \ref{corro} with $r = 0, m = 1$ to deduce that $S_t(\alpha)$ is not complete. Finally, if $t < 1$ and $t \neq \frac{1}{2^k}$, let $n$ be the smallest index with $s_n \ge 1$. We then see $s_n = 1$, so write $t2^n = 1 + \epsilon$ for some $\epsilon \in (0, 1)$ and define $j = \left \lceil \frac{-\log \epsilon}{\log 2} \right \rceil$. We then have $s_{n+i} = 2^i$ for all $i$ with $0 \le i < j$ and $s_{n+j} = 2^j + 1$, so that we can apply Corollary \ref{corro} with $r = n+j-1, m = 2^j$.
\end{proof}

With these trivial cases out of the way, we may from now on assume $1 < \alpha < 2$, while we may further assume $t \ge 1$ by the results in \cite{gra}. We will split up the interval $1 < \alpha < 2$ into four different regions; $1 < \alpha < \frac{3}{2}$, $\frac{3}{2} \le \alpha < \varphi$, $\varphi \le \alpha < 5^{1/3}$ and $5^{1/3} \le \alpha < 2$. We will deal with them in order from large to small.

\begin{fivetotheonethird} \label{five}
If $5^{1/3} \le \alpha < 2$, there is no value of $t \ge 1$ for which $S_t(\alpha)$ is complete.
\end{fivetotheonethird}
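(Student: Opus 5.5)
Since $5^{1/3} > \varphi$, Corollary \ref{corro} applies. The plan is therefore to exhibit, for every $t \ge 1$, some $m \ge 1$ and $r \ge 0$ with $s_1 + \cdots + s_r < m < s_{r+1}$; that is, to find a gap of size at least $2$ between a partial sum $s_1+\cdots+s_r$ and the next term $s_{r+1}$. Then $S_t(\alpha)$ is not complete. I will look for such a gap among the first few indices, $r \in \{0,1,2,3\}$, splitting into cases according to $t$.

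\textbf{Cases $r = 0, 1$.} If $s_1 \ge 2$, take $r = 0$, $m = 1$. So we may assume $s_1 = 1$, i.e.\ $t\alpha < 2$, giving $1 \le t < 2/\alpha$. Next, if $s_2 \ge 3$, take $r=1$, $m=2$. So we may assume $s_2 \le 2$. Since $t\alpha^2 \ge \alpha^2 \ge 5^{2/3} > 2$, we get $s_2 = 2$, and hence $t\alpha^2 < 3$.

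\textbf{Cases $r = 2, 3$.} With $s_1 + s_2 = 3$, if $s_3 \ge 5$ we take $r = 2$, $m = 4$. So we may assume $s_3 \le 4$, i.e.\ $t\alpha^3 < 5$. But $t \ge 1$ and $\alpha^3 \ge 5$ give $t\alpha^3 \ge 5$, a contradiction. This is exactly where the hypothesis $\alpha \ge 5^{1/3}$ is used, so the argument should already close at $r = 2$ and the case $r = 3$ should not be needed. I would double-check the boundary $\alpha = 5^{1/3}$, $t = 1$: there $s_3 = \lfloor 5 \rfloor = 5$, so $m = 4$ still lies strictly between $3$ and $5$, and the argument goes through.

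The main obstacle is essentially nil. The only care needed is verifying that the floors behave at the boundary values, and noting that $s_2 \ge 2$ follows from $\alpha^2 > 2$, so the assumption $s_2 \le 2$ pins $s_2 = 2$ exactly. Everything else is a direct application of Corollary \ref{corro}, whose hypothesis $\alpha \ge \varphi$ holds because $5^{1/3} \approx 1.71 > 1.618$.
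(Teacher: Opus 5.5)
Your proof is correct and is essentially the paper's argument: successive applications of Corollary~\ref{corro} with $(m,r) = (1,0)$, $(2,1)$ and $(4,2)$. The hypothesis $\alpha^3 \ge 5$ together with $t \ge 1$ forces $s_3 \ge 5$ in the last case, so the case $r=3$ is never needed.
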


\begin{proof}
If $t \ge \frac{2}{\alpha}$ we see $s_1 \ge 2$, which means we can apply Corollary \ref{corro} with $m = 1$, $r = 0$ to deduce that $S_t(\alpha)$ is not complete. So we may assume that $t$ is smaller than $\frac{2}{\alpha}$, implying $s_1 \le 1$. Now, if $t \ge \frac{3}{\alpha^2}$, then $s_2 \ge 3$, in which case we apply Corollary \ref{corro} with $m = 2$, $r = 1$. So we may further assume $t < \frac{3}{\alpha^2}$, which gives $s_2 \le 2$. But we then have $t \ge 1 \ge \frac{5}{\alpha^3}$, implying $s_3 \ge 5$. And in this case we apply Corollary \ref{corro} with $m = 4$, $r = 2$.
\end{proof}

\begin{phit} \label{phit}
If $\varphi \le \alpha < 5^{1/3}$, then $S_t(\alpha)$ is (entirely) complete if, and only if, $t < \min(\frac{3}{\alpha^2}, \frac{5}{\alpha^{3}})$.
\end{phit}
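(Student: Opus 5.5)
Throughout we take $t \ge 1$, as agreed after Proposition 2; the case $t<1$ is covered by Graham's results in \cite{gra}. The plan is to pin down the first three terms $s_1, s_2, s_3$ exactly. In the ``if'' direction this lets us feed them into Lemma \ref{lessthantwice}, using Lemma \ref{bore} for the tail. In the ``only if'' direction we find a gap and apply Corollary \ref{corro}, exactly as in the proof of Proposition \ref{five}.

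For sufficiency, assume $1 \le t < \min(\frac{3}{\alpha^2}, \frac{5}{\alpha^3})$.
\begin{itemize}
\item Since $\alpha \ge \varphi > \frac{3}{2}$, we have $\frac{3}{\alpha^2} < \frac{2}{\alpha}$. Hence $t\alpha < 2$, and $t\alpha \ge \varphi > 1$, so $s_1 = 1$.
\item Next $t\alpha^2 < 3$ and $t\alpha^2 \ge \varphi^2 = \varphi + 1 > 2$, so $s_2 = 2$.
\item Finally $t\alpha^3 < 5$ and $t\alpha^3 \ge \varphi^3 = 2\varphi + 1 > 4$, so $s_3 = 4$.
\end{itemize}
The inequalities $s_{n+1} \le 1 + s_1 + \cdots + s_n$ for $n = 0,1,2$ then read $1 \le 1$, $2 \le 2$ and $4 \le 4$. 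By Lemma \ref{bore}, applied to $\varphi \le \alpha < 5^{1/3}$ and $t \ge 1$, we have $s_{n+1} \le 2 s_n$ for all $n \ge 3$. So Lemma \ref{lessthantwice} with $r = 2$ shows that $S_t(\alpha)$ is entirely complete.

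For necessity, assume $t \ge \min(\frac{3}{\alpha^2}, \frac{5}{\alpha^3})$ and argue by cases, as in Proposition \ref{five}.
\begin{itemize}
\item If $s_1 \ge 2$, apply Corollary \ref{corro} with $m=1$, $r=0$. So assume $s_1 \le 1$ from now on.
\item If $t \ge \frac{3}{\alpha^2}$, then $s_2 \ge 3$. Since $s_1 < 2 < s_2$, Corollary \ref{corro} applies with $m = 2$, $r = 1$.
\item Otherwise $t < \frac{3}{\alpha^2}$, so $s_2 \le 2$, and we must have $t \ge \frac{5}{\alpha^3}$, so $s_3 \ge 5$. Then $s_1 + s_2 \le 3 < 4 < s_3$, and Corollary \ref{corro} applies with $m=4$, $r=2$.
\end{itemize}
In every case $S_t(\alpha)$ is not complete. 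Since entire completeness implies completeness, this establishes both equivalences.

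There is no real obstacle here. The only point needing care is that the lower bounds $s_1 \ge 1$, $s_2 \ge 2$, $s_3 \ge 4$ come from $t \ge 1$ and $\alpha \ge \varphi$, via the identities $\varphi^2 = \varphi+1$ and $\varphi^3 = 2\varphi+1$. We also need $\frac{3}{\alpha^2} < \frac{2}{\alpha}$ in this range, so that the condition on $s_1$ is automatically implied by the stated bound on $t$.
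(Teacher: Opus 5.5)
Your proposal is correct and follows the paper's proof essentially verbatim. For sufficiency you pin down $s_1=1$, $s_2=2$, $s_3=4$ and combine Lemma \ref{bore} with Lemma \ref{lessthantwice}, deferring $t<1$ to Graham; for necessity you rerun the case analysis of Proposition \ref{five} via Corollary \ref{corro}, rightly noting that the third case now rests on the hypothesis $t \ge \frac{5}{\alpha^3}$ rather than on $t \ge 1 \ge \frac{5}{\alpha^3}$.
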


\begin{proof}
For $t \ge \min(\frac{3}{\alpha^2}, \frac{5}{\alpha^{3}})$ we can apply the same proof that we just used for Proposition \ref{five} to conclude that $S_t(\alpha)$ is not complete. So it remains to show the converse; prove the completeness of $S_t(\alpha)$ if $t < \min(\frac{3}{\alpha^2}, \frac{5}{\alpha^{3}})$, and by Theorem $2$ in \cite{gra} we may assume $t \ge 1$. \\

By the assumed bounds on $\alpha$ and $t$ we claim $s_1 = 1, s_2 = 2, s_3 = 4$. Indeed,
\begin{align*}
1 &< \alpha \le t\alpha < \frac{3}{\alpha} < 2, \\
2 &< \alpha^2 \le t\alpha^2 < 3, \\
4 &< \alpha^3 \le t\alpha^3 < 5.
\end{align*}

Since $s_{n+1} \le 2s_n$ for all $n \ge 3$ by Lemma \ref{bore}, Lemma \ref{lessthantwice} implies that $S_t(\alpha)$ is entirely complete.
\end{proof}

We remark that the case $\alpha \ge \varphi$ is now fully dealt with. We therefore only need to consider those pairs $(t, \alpha)$ with $t \ge 1$ and $1 < \alpha < \varphi$, and it is thought (see e.g. \cite[p. 57]{cnt}) that $S_t(\alpha)$ is complete for all these values. We will not be able to fully settle this problem, but we will at least determine all those pairs for which $S_t(\alpha)$ is entirely complete. For example, for all $\alpha$ with $1 < \alpha < \varphi$, we will see that the resulting sequence is entirely complete for all $t \le \frac{9 - 3\sqrt{5}}{2} \approx 1.15$.

\begin{onepointfive} \label{threethirds}
If $\frac{3}{2} \le \alpha < \varphi$, then $S_t(\alpha)$ is entirely complete if, and only if, $t < \frac{3}{\alpha^2}$. In particular, if $t \le \frac{9 - 3\sqrt{5}}{2}$, then $S_t(\alpha)$ is entirely complete for all these values of $\alpha$.
\end{onepointfive}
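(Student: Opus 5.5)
We prove both directions separately, assuming $t \ge 1$ throughout. For $t<1$ we invoke Graham's Theorem $2$ in \cite{gra}, exactly as in the proof of Proposition \ref{phit}. Note also that $t<1$ forces $t<\frac{3}{\alpha^2}$, since $\alpha^2<\varphi^2<3$, so the claimed equivalence is consistent with Graham's result there.

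\emph{Sufficiency.} Assume $1 \le t < \frac{3}{\alpha^2}$. First pin down the initial terms. Since $\alpha\ge\frac32$ we have $1 < \alpha \le t\alpha$. Also $t\alpha < \frac{3}{\alpha} \le 2$, so $s_1 = 1$. For the second term, $2 \le \frac94 \le \alpha^2 \le t\alpha^2 < 3$, so $s_2 = 2$. Then $s_1 = 1 \le 1$ and $s_2 = 2 \le 1+s_1$, so the hypothesis of Lemma \ref{lessthantwice} holds for $n \le 1$, i.e.\ with $r=1$. By Lemma \ref{bore} (case $\frac32 \le \alpha<\varphi$) we have $s_{n+1}\le 2s_n$ for all $n\ge 2$, i.e.\ for all $n>r$. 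Lemma \ref{lessthantwice} then gives entire completeness.

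\emph{Necessity.} Suppose $t\ge \frac{3}{\alpha^2}$; we show $2 \notin P(S_t(\alpha))$. If $t\ge \frac{2}{\alpha}$, then $s_1\ge 2$. If $s_1 \ge 3$, then $1,2\notin P$ immediately. If $s_1=2$, then $1\notin P$, because $s_2 \ge \lfloor t\alpha^2\rfloor \ge 3$. If instead $t<\frac2\alpha$, then $s_1 \le 1$, while $t \ge 1$ gives $s_1=1$; also $s_2\ge 3$. Since the sequence is nondecreasing, every $s_n$ with $n \ge 2$ is at least $3$, so $2$ is not a sum of distinct terms. In every case $P(S_t(\alpha))\ne\mathbb{N}$, so the sequence is not entirely complete. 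Note that we cannot use Corollary \ref{corro} here, as $\alpha<\varphi$; but only entire completeness is at stake, so this direct check suffices.

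\emph{The ``in particular'' claim.} It amounts to showing $\frac{9-3\sqrt5}{2} < \frac{3}{\alpha^2}$ for all $\alpha<\varphi$. Since $\frac{3}{\alpha^2}>\frac{3}{\varphi^2}$, it is enough to verify $\frac{3}{\varphi^2} = \frac{9-3\sqrt5}{2}$. Using $\varphi^{-2}=\frac{3-\sqrt5}{2}$, this holds. For $t \le \frac{9-3\sqrt5}{2}$, sufficiency then applies. The case $t<1$ is covered by Graham's theorem, which gives entire completeness for $t<1$, $\alpha<2$.

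The only delicate point is the boundary bookkeeping: for $t\ge1$ we need $\frac94\ge2$, so that $s_2 = 2$ rather than $1$. We must also confirm the exact ranges in Lemma \ref{bore}, and make sure that the necessity argument does not rely on Corollary \ref{corro}, which is unavailable below $\varphi$.
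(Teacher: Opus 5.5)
Your proof is correct and follows essentially the same route as the paper: you pin down $s_1=1$ and $s_2=2$, then apply Lemma~\ref{bore} and Lemma~\ref{lessthantwice} with $r=1$, and for the converse your case split is an expanded form of the paper's observation that $s_2\ge 3$ forces $\{1,2\}\not\subset P(S_t(\alpha))$. Two small slips, neither affecting validity: in the case $s_1=2$ you actually show $1\notin P$ rather than the announced $2\notin P$ (fine, since every term is at least $2$), and Graham's result gives entire completeness for $t<1$ only when $1<\alpha\le 5^{1/3}$, not for all $\alpha<2$, which still suffices here because $\varphi<5^{1/3}$.
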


\begin{proof}
Analogously to the second part of the proof of Proposition \ref{phit}, by $t \ge 1$ and the given bounds on $\alpha$ we once again deduce $s_1 = 1, s_2 = 2$. And by applying both Lemma \ref{bore} and Lemma \ref{lessthantwice} once more, $S_t(\alpha)$ is entirely complete. Conversely, if $t \ge \frac{3}{\alpha^2}$, then $s_2 \ge 3$, so that $\{1, 2\} \not \subset P(S_t(\alpha))$.
\end{proof}

\begin{smallerthanphi} \label{small}
If $1 < \alpha < \frac{3}{2}$, then $S_t(\alpha)$ is entirely complete if, and only if, $t < \frac{2}{\alpha}$. In particular, if $t \le \frac{4}{3}$, then $S_t(\alpha)$ is entirely complete for all these values of $\alpha$.
\end{smallerthanphi}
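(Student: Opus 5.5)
We split according to whether $t<\frac{2}{\alpha}$ or not; as in the previous propositions, the results of \cite{gra} let us assume $t \ge 1$ throughout.

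For the forward direction, assume $1 \le t < \frac{2}{\alpha}$ and $1<\alpha<\frac32$. Then
\[
1 < \alpha \le t\alpha < 2,
\]
so $s_1 = 1$. The inequality $s_{n+1} \le 1 + s_1 + \cdots + s_n$ for $n = 0$ reads $s_1 \le 1$, which holds. By the first part of Lemma \ref{bore}, the inequality $s_{n+1} \le 2s_n$ holds for all $n \ge 1$. Lemma \ref{lessthantwice} with $r = 0$ then shows that $S_t(\alpha)$ is entirely complete.

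For the converse, suppose $t \ge \frac{2}{\alpha}$. Then $s_1 \ge 2$, and since the sequence is nondecreasing, every $s_n \ge 2$. Hence $1 \notin P(S_t(\alpha))$, so $S_t(\alpha)$ is not entirely complete. Note that this argument needs no assumption on $t$ beyond $t \ge \frac{2}{\alpha}$.

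For the ``in particular'' clause, let $t \le \frac43$. Since $\alpha < \frac32$, we have $\frac{2}{\alpha} > \frac43 \ge t$, so the first part applies. The case $t<1$ is covered by \cite{gra}.

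There is no real obstacle. The only point needing care is confirming that Lemma \ref{bore} applies from $n \ge 1$ in this range of $\alpha$, so that no further initial values $s_2, s_3, \ldots$ have to be checked by hand, as they were in Propositions \ref{phit} and \ref{threethirds}.
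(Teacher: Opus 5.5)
Your proof is correct and follows essentially the same route as the paper. You use $s_1 = 1$ together with Lemma \ref{bore} to invoke Lemma \ref{lessthantwice} with $r = 0$, note that $s_1 \ge 2$ forces $1 \notin P(S_t(\alpha))$ for the converse, and defer $t < 1$ to \cite{gra}.
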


\begin{proof}
If $t \ge \frac{2}{\alpha}$, then $1 \notin P(S_t(\alpha))$, so that $S_t(\alpha)$ is not entirely complete. On the other hand, if $1 \le t < \frac{2}{\alpha}$, then we deduce $s_1 = 1$, while $s_{n+1} \le 2s_n$ for all $n \ge 1$ by Lemma \ref{bore}. Once more, we are done by Lemma \ref{lessthantwice}.
\end{proof}

The only remaining case left is when $1 < \alpha \le \varphi$ and $t \ge \min\left(\frac{2}{\alpha}, \frac{3}{\alpha^2} \right)$. This is furthermore the only case where $S_t(\alpha)$ can be complete without being entirely complete. And this brings us to a new section.

\section{Computational possibilities}
In this final section we would like to give a taste of where this research could be taken next. For this, define $U \subset \mathbb{R}^2$ to be the set of all pairs $(t, \alpha)$ with $1 < \alpha < \varphi$ and $t \ge \min\left(\frac{2}{\alpha}, \frac{3}{\alpha^2} \right)$. By the results in the previous section we may assume $(t, \alpha) \in U$, in which case $S_t(\alpha)$ is necessarily not entirely complete, as either $s_1 \ge 2$, or $s_1 = 1$ and $s_2 \ge 3$. However, for any $(t, \alpha) \in U$ it is sufficient to provide $r$ and $X$ for which one can apply Lemma \ref{eventually} to still conclude that $S_t(\alpha)$ is complete. Moreover, if such $r$ and $X$ are found, let, for $1 \le i \le r+1$, $\epsilon_i > 0$ be such that $(t + \epsilon_i)(\alpha + \epsilon_i)^i = s_i + 1$, and define $\epsilon = \min_i \epsilon_i$. We then get that for every $\delta_1$ and $\delta_2$ smaller than $\epsilon$, the first $r+1$ elemens of $S_t(\alpha)$ coincide with the first $r+1$ elements of $S_{t+\delta_1}(\alpha + \delta_2)$, implying that $S_{t'}(\alpha')$ is complete for all $(t', \alpha')$ with $t \le t' < t + \epsilon$ and $\alpha \le \alpha' < \alpha + \epsilon$. For any compact set $C \subset U$ it is in principle possible to tile the entirety of $C$ with these kinds of squares (and furthermore mathematically prove that you have done so), and with a computer one can automate this search. \\

What we will do however, is a related but slightly different approach, where we work in the other direction. We start off with a given set $C \subset U$, and then note that there are only finitely many distinct possibilities for the values of, say, $s_1, s_2, \ldots, s_{10}$. All of these possible values can in principle be enumerated, and if it turns out that every single possibility leads to the existence of $X$ and $r$ with which to apply Lemma \ref{eventually}, then $S_t(\alpha)$ is complete for all $(t, \alpha) \in C$. First off we will prove such a result in a way that can easily by checked by hand, after which we will show what automating this process looks like.

\begin{adhocone} \label{adhocuno}
If $1 < \alpha \le \frac{5}{4}$, then $S_t(\alpha)$ is complete for all $t < \frac{4}{\alpha}$.
\end{adhocone}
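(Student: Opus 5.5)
The plan is to reduce to the case $2 \le s_1 \le 3$ and then exhibit, for every possible start of the sequence, an interval of representable integers to which Lemma \ref{eventually} applies.

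\textbf{Reduction.} If $t < 1$ we are done by \cite{gra}, and if $1 \le t < \frac{2}{\alpha}$ we are done by Proposition \ref{small}. So we may assume $\frac{2}{\alpha} \le t < \frac{4}{\alpha}$, which gives $s_1 = \lfloor t\alpha \rfloor \in \{2,3\}$.

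\textbf{Reformulating Lemma \ref{eventually}.} The hypothesis of Lemma \ref{eventually} is equivalent to finding $r$ and an interval $[X,Y) \subset P(\{s_1,\dots,s_r\})$ of length $Y - X \ge s_{r+1}$. It is convenient to track this interval as $r$ grows. If $[X,Y) \subset P(\{s_1,\dots,s_r\})$ and $s_{r+1} \le Y - X$, then adding $s_{r+1}$ shows $[X, Y + s_{r+1}) \subset P(\{s_1,\dots,s_{r+1}\})$. So once an interval is long enough, it keeps growing and will in particular have length $\ge s_{r+1}$ at the relevant step.

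\textbf{Constraints on the sequence.} Since $\alpha \le \frac54$, the sequence grows very slowly, with
\[
s_n \le s_{n+1} < \alpha(s_n + 1) \le \tfrac54 (s_n + 1).
\]
Hence consecutive terms differ by at most $\lfloor (s_n+1)/4 \rfloor$ roughly. Concretely, the sequence climbs one step at a time ($2 \to 3 \to 4 \to 5 \to 6 \to 7 \to \dots$), each value possibly repeated many times. Jumps of size $2$ become possible only once $s_n \ge 7$.

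\textbf{Symbolic enumeration.} The plan is to enumerate the possible initial segments symbolically by run lengths: the value $a = s_1$ occurs $k_a \ge 1$ times, then $a+1$ occurs $k_{a+1}$ times, and so on. The multiplicities are arbitrary when $\alpha$ is close to $1$, so this parametrisation is what makes the enumeration finite.

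\textbf{Case $s_1 = 2$.} The terms include some number $k_2 \ge 1$ of $2$'s, followed by $3$'s. (The value $3$ must appear, since $s_{n+1} \le \frac54(s_n+1)$ forces $3$ before $4$ when going up from $2$.)
\begin{itemize}
\item If $k_2 \ge 2$, then $\{2,2,3\}$ already represents $2,3,4,5,7$, and adding the next term ($3$ or $4$) yields a short interval that is at least as long as the following term.
\item If $k_2 = 1$, one continues with $2,3,3$ or $2,3,4$, and so on.
\end{itemize}
In each branch I would check by hand that after at most a few further terms the representable set contains an interval of length at least the next term. Long runs of equal values only help, since $j$ copies of $a$ together with one $a+1$ give every integer in a range of length about $j$.

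\textbf{Case $s_1 = 3$.} The analogous argument starts from runs of $3$'s, then $4$'s, then $5$'s.

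\textbf{Main obstacle.} The hard part is organising this case tree so that it provably terminates while the run lengths $k_a$ are unbounded. The branches where every run has length $1$ are the worst, e.g. $s = 3,4,5,6,7,8,\dots$ with increments as large as allowed. There one must verify directly that sums such as $3+4$, $3+5$, $4+5$, $3+4+5, \dots$ fill a long enough interval (for instance $\{3,4,5,6\}$ already gives $7,\dots,15$). I would handle these first. For branches with some run of length at least $2$, I would then argue monotonically: extra repeated terms can only enlarge the interval found in the all-length-$1$ case.
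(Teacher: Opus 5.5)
Your framework matches the paper's: enumerate the possible sets of values in an initial segment, note that repeated values only enlarge $P$, and feed an interval into Lemma \ref{eventually}. Your case $s_1=2$ is also fine, since $\{2,3,4\}\subset S_t(\alpha)$, $P(\{2,3,4\})\supseteq\{2,\ldots,7\}$, and the value after $4$ is at most $6$.

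\textbf{The gap.} The problem is your structural claim that the values climb by one until $7$, i.e.\ that jumps of size $2$ ``become possible only once $s_n \ge 7$''. The bound $s_{n+1} < \frac54(s_n+1)$ gives $s_{n+1}-s_n \le \lceil (s_n+1)/4\rceil$, not $\lfloor (s_n+1)/4\rfloor$. So a value can already be skipped after $4$ or after $5$. For example, with $\alpha=\frac54$ and $t=\frac52$ (note $t<4/\alpha$) the sequence is $3,3,4,6,7,9,11,\ldots$. With $\alpha=\frac54$ and $t=3$ it is $3,4,5,7,9,11,\ldots$.

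Your case tree therefore omits the branches $3,4,6,\ldots$ and $3,4,5,7,\ldots$, and these are exactly the nontrivial ones:
\begin{itemize}
\item $P(\{3,4,5\})$ and $P(\{3,4,6\})$ contain no interval of length greater than $3$.
\item Even $P(\{3,4,5,7\})$ and $P(\{3,4,6,7\})$ contain no interval as long as the next value, which is $8$ or $9$.
\item So, without repetitions, you must go out to five distinct values before Lemma \ref{eventually} applies.
\end{itemize}
The branch you single out as the worst, $3,4,5,6,\ldots$, is in fact the easy one. The paper disposes of it in its first step, via Lemma \ref{incl} with $x=6$ and the interval $\{3,\ldots,11\}\subset P(\{3,4,5,6\})$.

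\textbf{Closing the missing branches.} To handle these branches cleanly you also want sharper constraints than the one-step inequality $s_{n+1}<\alpha(s_n+1)$. That inequality admits continuations that never occur, such as $3,4,6,8,11$. There the longest interval in $P(\{3,4,6,8\})$ is $\{6,\ldots,15\}$, which is shorter than $11$, so this branch would not close at that point.

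The paper first removes the case $\alpha\le\frac76$ and the case $\{5,6\}\subset S_t(\alpha)$. Then, for $\frac76<\alpha\le\frac54$ with $s_{n-1}=4$ and $s_n=6$, it uses a two-step estimate:
\[
t\alpha^{n+1} > \tfrac76\cdot 6 = 7 \quad\text{and}\quad t\alpha^{n+1} \le \alpha^2\, t\alpha^{n-1} < \tfrac{25}{16}\cdot 5 < 8 .
\]
This forces $s_{n+1}=7$, and then $s_{n+2}\in\{8,9\}$ and $s_{n+3}\le 12$. The branch $4,5,7$ is handled analogously. Each of the sets $\{3,4,5,7,8\}$, $\{3,4,5,7,9\}$, $\{3,4,6,7,8\}$ and $\{3,4,6,7,9\}$ has $\{9,\ldots,20\}$ in its subset-sum set, so Lemma \ref{eventually} applies with $X=9$.

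Without this, or an equivalent explicit treatment of the branches where $5$ or $6$ is skipped, your argument does not cover all pairs $(t,\alpha)$ in the statement. The pairs $(\frac52,\frac54)$ and $(3,\frac54)$ above are uncovered examples.
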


Before we start the proof, let us quickly mention and prove two small, but valuable lemmas.

\begin{ezupper} \label{ez}
For all $\alpha$ and $t$ we have $s_{n} < \alpha(s_{n-1} + 1)$ for all $n \ge 2$.
\end{ezupper}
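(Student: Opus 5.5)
The inequality $s_n < \alpha(s_{n-1}+1)$ should follow directly from the defining floors. Throughout, assume $\alpha > 0$ and $t > 0$, as in the definition of $S_t(\alpha)$.

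First, recall that for any real $x$ we have $\lfloor x \rfloor > x - 1$, that is, $x < \lfloor x \rfloor + 1$. Apply this with $x = t\alpha^{n-1}$ for $n \ge 2$ to obtain
\begin{equation*}
t\alpha^{n-1} < s_{n-1} + 1.
\end{equation*}
Multiplying by the positive number $\alpha$ preserves the strict inequality and gives $t\alpha^n < \alpha(s_{n-1}+1)$.

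Second, $s_n = \lfloor t\alpha^n \rfloor \le t\alpha^n$. Chaining this with the previous bound yields
\begin{equation*}
s_n \le t\alpha^n < \alpha(s_{n-1}+1),
\end{equation*}
which is the claimed inequality.

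There is no real obstacle here. The only points needing care are that the inequality $x < \lfloor x\rfloor + 1$ is strict, so the final inequality is strict, and that $\alpha > 0$, so multiplying by $\alpha$ keeps the direction of the inequality. The restriction $n \ge 2$ is needed only so that $s_{n-1}$ is defined.
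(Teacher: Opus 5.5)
Your proof is correct and is essentially the paper's own argument: both bound $s_n = \lfloor t\alpha^n\rfloor \le \alpha\,(t\alpha^{n-1})$ and then use $t\alpha^{n-1} < s_{n-1}+1$ together with $\alpha>0$. Your explicit remarks about the strictness of $x < \lfloor x\rfloor + 1$ and the positivity of $\alpha$ are the only details the paper leaves implicit.
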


\begin{proof}
An easy calculation;
\begin{align*}
s_{n} &= \left \lfloor t\alpha^{n} \right \rfloor \\
&\le \alpha(t\alpha^{n-1}) \\
&< \alpha(s_{n-1} + 1). \qedhere
\end{align*}
\end{proof}

\begin{ezinclusion} \label{incl}
If $1 < \alpha \le 1 + \frac{1}{x}$ for some $x > t$, then $m \in S_t(\alpha)$ for all $m$ with $s_1 \le m \le x$.
\end{ezinclusion}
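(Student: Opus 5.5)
The plan is to show that consecutive terms of $S_t(\alpha)$ differ by at most $1$ as long as they stay at most $x$. Since the sequence is nondecreasing and unbounded, it then cannot skip any integer $m$ with $s_1 \le m \le x$.

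\textbf{Step 1 (monotonicity and growth).} Since $\alpha > 1$, the sequence $t\alpha^n$ is increasing, so $s_n$ is nondecreasing. It tends to infinity, so for every $m \ge s_1$ there is a smallest index $n \ge 2$ with $s_n \ge m$. If $m = s_1$ there is nothing to prove, so assume $m > s_1$; this gives $n \ge 2$ and $s_{n-1} < m$, i.e.\ $s_{n-1} \le m-1$.

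\textbf{Step 2 (key increment bound).} Next I will show that $s_{n} \le s_{n-1}+1$ whenever $s_{n-1} + 1 \le x$, that is, whenever $s_{n-1} \le x-1$. By Lemma \ref{ez},
\begin{equation*}
s_n < \alpha(s_{n-1}+1) \le \Big(1+\frac{1}{x}\Big)(s_{n-1}+1) = s_{n-1}+1+\frac{s_{n-1}+1}{x} \le s_{n-1}+2.
\end{equation*}
Since $s_n$ is an integer, $s_n \le s_{n-1}+1$.

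\textbf{Step 3 (conclude).} Take $m$ with $s_1 < m \le x$ and $n$ as in Step 1. Then $s_{n-1} \le m-1 \le x-1$, so Step 2 applies and gives $s_n \le s_{n-1}+1 \le m$. Together with $s_n \ge m$ this forces $s_n = m$, so $m \in S_t(\alpha)$.

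The main obstacle is the edge case in Step 2 where $s_{n-1}+1 = x$ exactly (possible when $x$ is an integer). There the displayed bound gives only $s_n < s_{n-1}+2$, but the inequality is strict, so the integer conclusion $s_n \le s_{n-1}+1$ still holds. This bound is all that is needed, so the argument goes through for all $m \le x$.

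One should also note where the hypothesis $x > t$ enters. It guarantees that the range $s_1 \le m \le x$ is meaningful: by the displayed inequality applied to $t\alpha \le t(1+1/x) < t+1$, we get $s_1 \le \lfloor t\alpha \rfloor \le x$. Otherwise the hypothesis is not needed, since the increment argument itself uses only $\alpha \le 1+\frac1x$.
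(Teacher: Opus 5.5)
Your proof is correct and is essentially the paper's argument: the paper also takes $n$ with $s_{n-1} < m \le s_n$ and chains $m \le s_n < \alpha(s_{n-1}+1) \le \alpha m \le m + m/x \le m+1$, which is your increment bound with $s_{n-1}+1 \le m$ substituted directly. The only inaccuracy is in your closing aside: $x > t$ does not force $s_1 \le x$ unless $x$ is an integer (e.g.\ $t = 1.9$, $x = 1.95$, $\alpha = 1 + 1/x$ gives $s_1 = 2$), but this is harmless because the statement is vacuous when $s_1 > x$ and your argument never uses $x > t$.
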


\begin{proof}
Without loss of generality assume $s_1 < m \le x$, and let $n \ge 2$ be the largest integer with $s_{n-1} < m$. By Lemma \ref{ez} we then have 
\begin{align*}
m &\le s_n \\
&< \alpha(s_{n-1} + 1) \\
&\le \alpha m \\
&\le m + \frac{m}{x} \\
&\le m+1.
\end{align*} 

Since $m \le s_n < m+1$ we indeed get $s_n = m$.
\end{proof}

\begin{proof}[Proof of Proposition \ref{adhocuno}]
As a first remark, by applying Lemma \ref{incl} with $x = 4$, we deduce $\{3, 4\} \subset S_t(\alpha)$. \\

Now, if $\alpha \le \frac{7}{6}$, then we even have $\{3, 4, 5, 6\} \subset S_t(\alpha)$, by applying Lemma \ref{incl} with $x = 6$ instead. And by employing Lemma \ref{ez} with $n$ the largest index with $s_{n-1} < 7$, we moreover get $s_n \in \{7, 8\}$. As $P(\{3, 4, 5, 6 \})$ contains $\{3, 4, \ldots, 11\}$, we can finish this case by applying Lemma \ref{eventually} with $X = 3, r = n-1$. \\

We will therefore assume $\frac{7}{6} < \alpha \le \frac{5}{4}$ from now on, and we may further assume $\{5, 6\} \not \subset S_t(\alpha)$, as we would otherwise be done by the previous argument. \\

By Lemma \ref{ez} and the assumption $\alpha \le \frac{5}{4}$, we note that $s_n \in \{5, 6\}$ if $s_{n-1}$ is the largest element smaller than $5$. Depending on the value of $s_n$ we now have to distinguish between two cases, and let us first assume $s_n = 6$. For $s_{n+1}$ we then, on the one hand, get 
\begin{align*}
t\alpha^{n+1} &> \frac{7}{6} t\alpha^{n} \\
&\ge \frac{7}{6} s_{n} \\
&= 7.
\end{align*}

While on the other hand,
\begin{align*}
t\alpha^{n+1} &\le \left(\frac{5}{4}\right)^2t\alpha^{n-1} \\
&< \frac{125}{16} \\
&< 8. 
\end{align*}

And we conclude $s_{n+1} = 7$. For $s_{n+2}$ we analogously have $8 \le s_{n+2} \le 9$, while $s_{n+3} \le 12$. \\

Regardless of whether $\{3, 4, 6, 7, 8\}$ or $\{3, 4, 6, 7, 9\}$ is contained in $S_t(\alpha)$, in both cases one can check that we have $\{9, 10, \ldots, 20\} \subset P(\{s_1, \ldots, s_{n+2}\})$. This in turn implies that we are done by applying Lemma \ref{eventually} with $X = 9, r = n+2$. \\

The second case we have to deal with is $s_n = 5$ with $6 \notin S_t(\alpha)$. As it turns out however, this case works analogously, and we get that either $S_1 = \{3, 4, 5, 7, 8\}$ or $S_2 = \{3, 4, 5, 7, 9\}$ is contained in $S_t(\alpha)$, while $S_t(\alpha)$ also contains either $10, 11$ or $12$. With both $S_1$ and $S_2$ we are again lucky enough to get $\{9, 10, \ldots, 20\} \subset P(S_i)$, so one final application of Lemma \ref{eventually} finishes the proof.
\end{proof}

Even though it was not the most thrilling proof of all time, let us quickly recap it anyway, to see how one could perhaps automate it and extend Proposition \ref{adhocuno} to larger intervals of $t$ and $\alpha$. By repeated applications of Lemma \ref{ez} we deduced that one of the following sets must be contained in $S_t(\alpha)$ (where $7 \le x \le 8$ and $9 \le y \le 12$):
\begin{align*}
&\{3, 4, 5, 6, x \}, \\
&\{3, 4, 5, 7, 8, y \}, \\
&\{3, 4, 5, 7, 9, y \}, \\
&\{3, 4, 6, 7, 8, y \}, \text{or} \\
&\{3, 4, 6, 7, 9, y \}.
\end{align*}

And in every case, the elements excluding the last one generate a large enough interval to apply Lemma \ref{eventually} with. To give a visual representation of what this looks like, here is a plot of which values of $t$ and $\alpha$ lead to which of the above subsets.

\begin{figure}[H]
    \centering
    \includegraphics[width=0.8\textwidth]{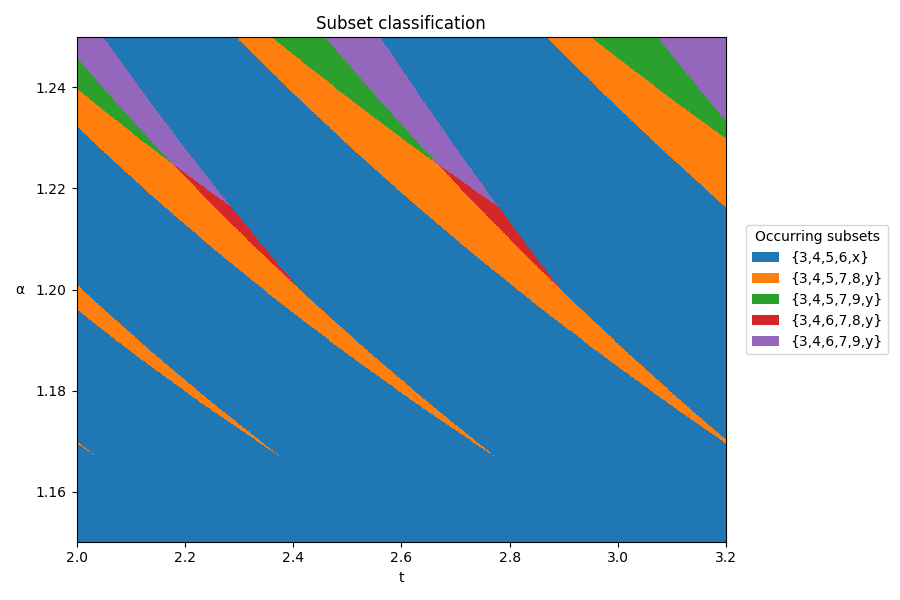}
    \label{fig1}
\end{figure}

Now, if we were to extend the range of $t$ or $\alpha$, the potential number of subsets one has to check increases quite rapidly of course, but a computer seems very amenable to deal with such issues. We have therefore done some coding in order to extend Proposition \ref{adhocuno} even further, which leads to the following computer-assisted result.

\begin{adhoctwo} \label{adhocduo}
If $1.3 < \alpha \le 1.4$, then $S_t(\alpha)$ is complete for all $t \le 3$. \\
If $1.2 < \alpha \le 1.3$, then $S_t(\alpha)$ is complete for all $t \le 5$. \\
If $1.1 < \alpha \le 1.2$, then $S_t(\alpha)$ is complete for all $t \le 10$. \\
If $1 < \alpha \le 1.1$, then $S_t(\alpha)$ is complete for all $t \le 50$. 
\end{adhoctwo}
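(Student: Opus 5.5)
The plan is to automate the argument used for Proposition \ref{adhocuno}: organise all possible initial segments of $S_t(\alpha)$ into a finite tree, and check that every leaf admits an application of Lemma \ref{eventually}. Fix one of the four boxes, say $C = \{(t,\alpha) : 1 \le t \le T,\ a < \alpha \le b\}$. The case $t<1$ is covered by Graham's results, and pairs outside $U$ are covered by Propositions \ref{threethirds} and \ref{small}. Since $\alpha > 1$ is bounded away from $1$ on each box except the last, we know $s_1 = \lfloor t\alpha \rfloor \in \{1, \ldots, \lfloor Tb \rfloor\}$. We therefore start by enumerating the possible values of $s_1$. To avoid the degenerate behaviour near $\alpha = 1$ in the last box, it is cleaner to index by the values taken rather than the positions, as in the proof of Proposition \ref{adhocuno}: there Lemma \ref{incl}, with $x$ close to $1/(\alpha-1)$, already forces all integers from $s_1$ up to roughly $\min(x, 1/(b-1))$ into $S_t(\alpha)$. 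For the box $1<\alpha\le 1.1$ we take $x = 10$, which handles $t<10$ directly, since the integers $s_1, \ldots, 10$ together with the next term (at most $11$ or $12$ by Lemma \ref{ez}) give a long interval in $P$. The remaining $t \le 50$ are dealt with by the tree search below.

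The tree search runs as follows. A node is a finite increasing list $(v_1, \ldots, v_k)$ of integers, together with the polygon of parameters $(t,\alpha)$ in $C$ consistent with $s_{j}=v_j$ for the appropriate indices. The children of a node are obtained from Lemma \ref{ez} and its lower analogue $s_n \ge \lfloor \alpha s_{n-1} \rfloor \ge \lfloor a\, s_{n-1}\rfloor$: the next value satisfies $\lfloor a v_k\rfloor \le v_{k+1} < b(v_k+1)$. In fact it is sharper, and still rigorous, to keep the exact constraints $v_j \le t\alpha^j < v_j+1$ and to discard a child exactly when the resulting region is empty. This is a linear feasibility problem in $(\log t, \log \alpha)$, so it can be checked with exact rational arithmetic. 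At each node we compute $P(\{v_1,\ldots,v_k\})$ by a bitset subset-sum. We declare the node a leaf, and successful, as soon as there is an $X$ with $[X, X+v_{k+1}^{\max}) \subset P(\{v_1, \ldots, v_k\})$, where $v_{k+1}^{\max}$ is the largest feasible next value. Lemma \ref{eventually} then applies with $r=k$ uniformly on that node's region; it is legitimate because $t\ge1$ and $\alpha<\varphi$. Completeness on the whole box follows once every branch terminates in a success.

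The main obstacle is to make sure the search terminates and stays small. Heuristically it does: the gaps $s_{n+1}-s_n$ are about $(\alpha-1)s_n$, so $P$ of the first $k$ terms quickly contains long intervals whose length grows like the sum $\approx s_k/(\alpha-1)$, which eventually exceeds $s_{k+1}\approx \alpha s_k$. Branching can still blow up near the upper ends of $t$, however, and parity or gcd obstructions could in principle appear in early prefixes (for example, all early terms being even). To handle this I would first bound the depth by an explicit argument: once the prefix contains $\lceil 1/(b-1)\rceil$ consecutive-ish values including two coprime ones, the sumset is an interval beyond a Frobenius-type bound. I would then let the computer certify the remaining finitely many branches, reporting the maximal depth and number of leaves per box as a check. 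If some box resists, the fallback is to subdivide it in $\alpha$, which is harmless since the statement is a union of boxes.
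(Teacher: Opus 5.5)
\textbf{The gap is the strip near $\alpha = 1$ in the last box.} Your overall plan is also the paper's: a finite, computer-certified case analysis whose leaves are closed by Lemma~\ref{eventually}. The paper implements it by recursively splitting rectangles into four. Because $t\alpha^n$ increases in both variables, the sequences at the bottom-left and top-right corners fix the common terms on the whole rectangle and bracket the first term where they differ. Your exact-polygon tree would be an acceptable variant on regions bounded away from $\alpha=1$, but it breaks down in the last box, and three parts of your proposal fail there.

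\begin{itemize}
\item \emph{The positional tree is infinite near $\alpha = 1$.} As you set it up, with constraints $v_j \le t\alpha^j < v_j+1$ at fixed indices $j$, the tree is positional. Take $t = 7$: the prefix of $k$ copies of $7$ is feasible for every $k$ (choose $\alpha$ close enough to $1$). Its subset sums are all multiples of $7$, so no such node ever becomes a successful leaf.
\item \emph{Indexing by distinct values does not fix this by itself.} The feasible set is then no longer a single polygon. With only the bounds $v_k < v_{k+1} < 1.1(v_k+1)$, the branch $50, 55, 60, 65, \ldots$ never closes.
\item \emph{The shortcut via Lemma~\ref{incl} with $x=10$ only handles $s_1 \le 5$, not all $t<10$.} For $s_1 \ge 6$, the subset sums of $\{s_1,\ldots,10\}$ come in runs of length at most $7$; for $\{6,\ldots,10\}$ they are $[6,10]$, $[13,19]$, $[21,27]$, \ldots. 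These runs are shorter than the next term, which is $11$ or $12$.
\end{itemize}

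Separately, your ``Frobenius-type'' depth bound is not valid. The Frobenius bound concerns sums with unlimited repetition, whereas $P$ uses each term at most once.

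\textbf{What is missing.} You need a separate argument for a strip $1<\alpha\le 1+\delta$, uniform in $t \le 50$. After removing that strip, the search region is bounded away from $\alpha=1$ and has bounded depth. This is exactly what the paper does, via Proposition~\ref{tothemoontwo}. Set $v = \lceil t\rceil$ and $w = \lceil\sqrt{t}\rceil$, and suppose $\alpha \le 1 + 1/(v+2w)$. Then:
\begin{itemize}
\item all of $v, v+1, \ldots, v+2w$ occur in $S_t(\alpha)$;
\item the following term is at most $v+2w+2$, by Lemma~\ref{ez};
\item sums of $w$ and of $w+1$ of these consecutive integers together fill $v+2w+w^2+1$ consecutive integers starting at $vw + w(w-1)/2$, so Lemma~\ref{eventually} applies.
\end{itemize}
For $t\le 50$ one has $v+2w \le 66$. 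So this covers $\alpha \le 1+1/66$, and the computer only has to handle $1.015 \le \alpha \le 1.1$.

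Your remark about taking $x$ close to $1/(\alpha-1)$ points in this direction. To make it work, you need $x$ to grow as $\alpha\to1$, rather than fixing $x=10$, together with the subset-sum estimate above for runs of consecutive integers.

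A minor point: the constraints in $(\log t,\log\alpha)$ have irrational constants $\log v_j$. Exact feasibility therefore has to be decided by comparing products of integer powers, not by rational arithmetic.
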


\begin{proof}
Given any rectangle $C \subset U$, let $(t_1, \alpha_1)$ and $(t_2, \alpha_2)$ be the bottom-left vertex and top-right vertex with sequences $(s_1, s_2, \ldots)$ and $(s'_1, s'_2, \ldots)$ respectively. Let $i_1 < i_2 < \cdots < i_k$ be those indices for which $s_{i_j} = s'_{i_j}$ (which we call the overlap) and let $i_0$ be the first index with $s_{i_0} \neq s'_{i_0}$ (which we call the dummy), and note that $i_0 < i_k$ and $i_0 > i_k$ are both possible. We then deduce that all pairs $(t, \alpha) \in C$ with sequence $(s''_1, s''_2, \ldots)$ have this overlap as well (that is, $s_{i_j} = s'_{i_j} = s''_{i_j}$ for all $j$ with $1 \le j \le k$), while $s_{i_0} \le s''_{i_0} \le s'_{i_0}$. In particular, if for all $s''_{i_0}$ with $s_{i_0} \le s''_{i_0} \le s'_{i_0}$ there exists an $X$ such that $m \in P(\{s_{i_1}, s_{i_2}, \ldots, s_{i_{k-1}}, \min(s''_{i_0}, s_{i_k}) \})$ for all $m$ with $X \le m < X + \max(s''_{i_0}, s_{i_k})$, then Lemma \ref{eventually} applies for all $(t, \alpha) \in C$. \\

On the other hand, if for some of these sequences no such $X$ can be found, then we can split the rectangle $C$ into $4$ smaller rectangles, and try again. This is precisely what we have coded, and the results can be found on the author's GitHub page\footnote{See https://github.com/Woett/Complete-sequences-data.}. There, one can find specific partitions (of the different regions of $U$ mentioned in Proposition \ref{adhocduo}) into rectangles for which Lemma \ref{eventually} applies to the overlap and dummy, as explained above. We note that, for $\alpha \le 1.2$ we may assume $t \ge 2$ by Proposition \ref{adhocuno}, while for $\alpha \le 1.1$ it is sufficient to take $\alpha \ge 1.015$ by Proposition \ref{tothemoontwo} below.
\end{proof}

If one wants to verify the data from the proof of Proposition \ref{adhocduo}, this verification process essentially consists of the following three parts:

\begin{enumerate}
	\item Check that the rectangles together cover the entire region.
	\item Calculate the overlap and dummy of the bottom-left and top-right vertices of every rectangle.
	\item Check that, with this overlap and dummy, Lemma \ref{eventually} may indeed be invoked.
\end{enumerate}

All in all, combining our results with those from \cite{gra} now provides the following picture for $t \le 2$.

\begin{figure}[H]
    \centering
    \includegraphics[width=0.8\textwidth]{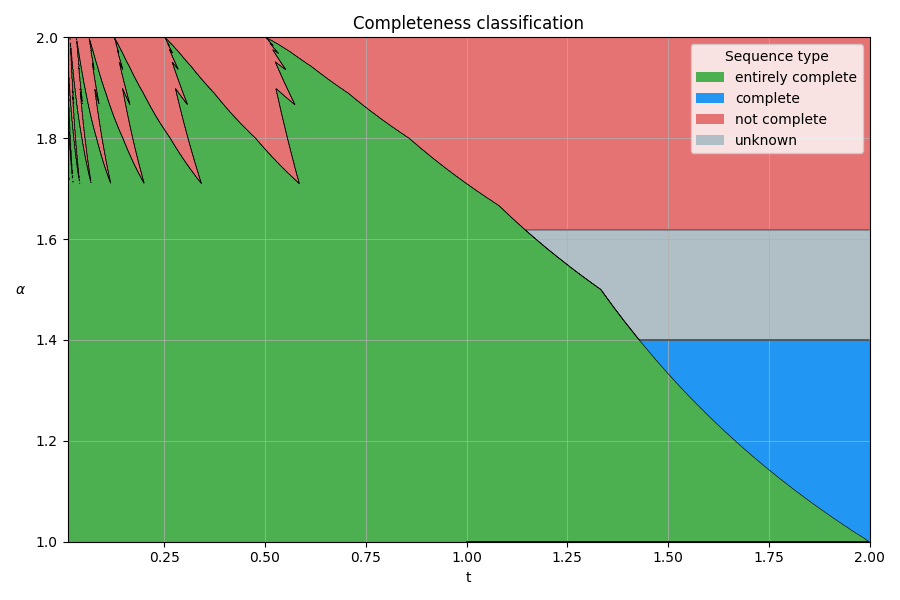}
    \label{fig2}
\end{figure}

For a final note, we would like to mention one nice application of Lemma \ref{incl}. If one plots $(t, \alpha)$ in the plane as in the above picture, then up till now, the region for which we have managed to show that $S_t(\alpha)$ is complete, is bounded. So to finish this paper, let us prove it for a region with infinite area instead.

\begin{infareasquared} \label{tothemoontwo}
The sequence $S_t(\alpha)$ is complete for all $\alpha$ with $$1 < \alpha \le 1 + \frac{1}{\lceil t \rceil + 2\lceil \sqrt{t} \rceil}.$$
\end{infareasquared}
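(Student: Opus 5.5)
The plan is to apply Lemma \ref{incl} to obtain a long run of consecutive integers at the start of $S_t(\alpha)$, and then show that sums of a fixed number of distinct elements from this run cover an interval long enough to feed into Lemma \ref{eventually}. Note that Lemma \ref{eventually} needs $t \ge 1$; the case $t<1$ is covered by \cite{gra}, since the hypothesis forces $\alpha \le 1+\frac{1}{3} < 5^{1/3}$.

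\textbf{Step 1: the run of consecutive integers.} Put $a = \lceil t \rceil$, $b = \lceil \sqrt{t} \rceil$ and $x = a + 2b$. Then $x > t$, so Lemma \ref{incl} applies and every integer $m$ with $s_1 \le m \le x$ lies in $S_t(\alpha)$. Next I bound $s_1$. Since $\alpha \le 1 + 1/x$ and $t < x$, we get $t\alpha < t+1$, hence $s_1 \le a$. So $\{a, a+1, \ldots, a+2b\} \subset S_t(\alpha)$.

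Because the sequence is nondecreasing, these values are taken among $s_1, \ldots, s_r$, where $r$ is the largest index with $s_r \le x$. Repeated values do no harm, since we just pick one index per value. By Lemma \ref{ez}, $s_{r+1} < \alpha(s_r+1) \le (1+1/x)(x+1) < x+3$. Hence $s_{r+1} \le x+2 = a+2b+2$.

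\textbf{Step 2: sums of $k$ distinct elements.} For the consecutive block $\{a, \ldots, a+2b\}$ of $2b+1$ integers, the sums of exactly $k$ distinct elements form the full interval
\[
\Big[\,ka + \tfrac{k(k-1)}{2},\; k(a+2b) - \tfrac{k(k-1)}{2}\,\Big],
\]
by the standard exchange argument. Its width is $k(2b+1-k)$. The $k$-interval and the $(k+1)$-interval overlap or abut precisely when $a + k^2 \le 2bk + 1$, that is,
\[
a \le b^2 - (b-k)^2 + 1 .
\]
For $k = b$ this reads $a \le b^2 + 1$, which holds because $\lceil t\rceil \le \lceil \sqrt t\rceil^2$.

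Therefore the union of the $b$-sums and the $(b+1)$-sums is one interval $I$. It starts at $X = ba + b(b-1)/2$ and has length about $b(b+1) + (b+1)b$, at least $2b^2 + b$ or so. (For $b+1 > 2b+1$, i.e. $b = 0$, there is nothing to check since $t>0$; $b = 1$ should be verified directly.)

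\textbf{Step 3: closing with Lemma \ref{eventually}.} It remains to check that $|I| \ge s_{r+1}$, i.e. $|I| \ge a + 2b + 2$. Using $a \le b^2$, this amounts to $2b^2 + b \gtrsim b^2 + 2b + 2$, which holds for $b \ge 2$. Then every $m$ with $X \le m < X + s_{r+1}$ lies in $P(\{s_1,\ldots,s_r\})$, and Lemma \ref{eventually} gives completeness.

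\textbf{Main obstacle.} The delicate point is the exact bookkeeping at the boundary. The length of $I$ must be counted precisely (off-by-one in the overlap), and the small case $b = 1$ (so $t \le 1$, $a = 1$, where Step 2 would need $k=1$ and $k=2$, i.e. using $\{1,2,3\}$ directly) must be handled. If the two-block union turns out to be too short in some edge case, I would also chain in $k = b-1$ or $k = b+2$. Their overlap conditions $a \le b^2$ hold by the same inequality, so including them only lengthens $I$.
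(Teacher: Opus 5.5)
Your proposal follows the paper's proof step for step. It uses the same run $\{a,\ldots,a+2b\}$ from Lemma \ref{incl}, the same bound $s_{r+1}\le a+2b+2$ from Lemma \ref{ez}, the same $X=ba+b(b-1)/2$ with $b$-sums and $(b+1)$-sums glued via $a\le b^2+1$, and then Lemma \ref{eventually}, with $t<1$ delegated to \cite{gra}.

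The one real slip is the size of $I$ in Steps 2--3. The two intervals overlap by $b^2-a$ integers, so $|I|$ is not the sum of the two widths, and your lower bound $|I|\ge 2b^2+b$ is false in general. Take $t=10$, so $a=10$ and $b=4$. The $4$-sums of $\{10,\ldots,18\}$ give $[46,66]$ and the $5$-sums give $[60,80]$, so $I$ has $35<36$ elements.

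The exact count, as in the paper, is $I=[X,\,X+a+2b+b^2]$, which contains $a+b^2+2b+1$ integers. The required inequality $|I|\ge a+2b+2\ge s_{r+1}$ then reduces to $b\ge 1$. So $a\le b^2$ is needed only for the gluing, and no restriction to $b\ge 2$ arises. The case $b=1$ (that is, $t=1$) also needs no separate treatment: $\{1,2,3\}$ gives $[1,5]$ and $s_{r+1}\le 5$. You did flag this bookkeeping as the delicate point, and with the exact count the argument is complete.
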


\begin{proof}
For ease of notation, define $v = \lceil t \rceil$ and $w = \lceil \sqrt{t} \rceil$. By noting that $s_1 \le v$, we get $A := \{v, v + 1, \ldots, v + 2w\} \subset S_t(\alpha)$ by Lemma \ref{incl}. And with $r$ such that $s_r = v + 2w$, we further obtain $s_{r+1} \le v + 2w + 2$ by Lemma \ref{ez}. \\

Now, by defining $X = vw + \frac{1}{2}w(w-1)$ and taking sums of $w$ elements from $A$, we then deduce $m \in P(\{s_1, \ldots, s_r \})$ for all $m$ with $X \le m \le X + w + w^2$. And by taking sums of $w+1$ elements from $A$, we furthermore deduce that this also holds for all $m$ with $X + v + w \le m \le X + v + 2w + w^2$. In particular, combining the intervals we get $m \in P(\{s_1, \ldots, s_r \})$ for all $m$ with $X \le m < X + s_{r+1}$. Hence, the proof is finished by applying Lemma \ref{eventually}, as we are free to assume $t \ge 1$.
\end{proof}

\end{document}